\begin{document}

\title[Block Path Solidity]{Universal Graphs with a Forbidden Subgraph: Block Path Solidity}
\author[G.~Cherlin and S.~Shelah]{
Gregory Cherlin
\\Department of Mathematics, \\Rutgers University, U.S.A.\\
\and\\
Saharon Shelah\\
Department of Mathematics\\ Hebrew University, Jerusalem, Israel\\
and Rutgers University, Piscataway, NJ 08854
}
\begin{abstract}
Let $C$ be a finite connected graph for which there is a countable universal $C$-free graph,
and whose tree of blocks is a path.
Then the blocks of $C$ are complete. This generalizes a result of F\"uredi and Komj\'ath, and
fits naturally into a set of conjectures regarding the existence of countable $C$-free graphs,
with $C$ an arbitrary finite connected graph.
\end{abstract}
\thanks{First author supported by NSF Grant DMS-1101597} 
\thanks{The second author would like to thank the NSF, grant no. DMS 1101597, for support of this paper. Publication 1033.}
\subjclass[2010]{Primary 05C60; Secondary 05C63,03C15.}

\maketitle
\thispagestyle{empty}

\section*{Introduction}

\subsection*{\quad The main theorem}
The general problem to be considered here is the following.

\begin{problem}[Universality with 1 Forbidden Subgraph]
Let $C$ be a finite connected graph. When is there a universal $C$-free graph?
\end{problem}

We anticipate that the problem as posed has an explicit solution, but not a very simple one.
The present paper is devoted toward the proof of one concrete result which is part of a 
general plan of attack on the problem. That result reads as follows.

\begin{theorem}[Main Theorem]\label{MainTheorem}
Let $C$ be a finite connected block path, and suppose there is a (weakly) universal $C$-free graph.
Then the blocks of $C$ are complete.
\end{theorem}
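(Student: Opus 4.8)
The plan is to prove the contrapositive: assuming some block $B$ of $C$ fails to be complete, I will exhibit a family of countable $C$-free graphs that cannot be simultaneously embedded into any single countable $C$-free graph, thereby refuting weak universality. The first step is to extract the combinatorial consequence of incompleteness. Fix a non-complete block $B$ together with two vertices $u,v\in B$ with $uv\notin E(C)$. Since $B$ is $2$-connected, $u$ and $v$ lie on a common cycle of $B$, so there are two internally disjoint $u$--$v$ paths inside $B$. Together with the linear arrangement of the remaining blocks, this yields a \emph{connector gadget}: a finite $C$-free graph $F$ carrying two distinguished ports, obtained from $C$ by splitting along the non-edge $uv$, such that $F$ itself is $C$-free but any graph in which the two ports receive a common extension of a prescribed shape contains a copy of $C$. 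The block-path hypothesis is exactly what guarantees that such a reattachment closes up into \emph{precisely} a copy of $C$ and not some unrelated supergraph, since along a path the gadget meets the rest of $C$ only at the (at most) two cut vertices adjacent to $B$; completeness of the blocks would, by contrast, make the corresponding attachment rigid and leave no room for the split.

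Next I would assemble these gadgets into a parametrized family. The idea is to glue countably many copies of the connector along a common \emph{spine}, so that each copy offers an independent binary choice (routing through one or the other of the two internally disjoint $u$--$v$ paths), while the non-edge $uv$ keeps every finite configuration $C$-free regardless of the choices made. This produces, for each branch through an infinite binary (or $\omega$-branching) tree of choices, a distinct admissible local type for a hypothetical completing vertex, while arranging that two \emph{incompatible} completions sharing enough of the spine would create $C$. Embedding a sufficiently large finite stage into a hypothetical universal $U$ and applying a pigeonhole / $\Delta$-system argument to the countably many vertices of $U$, I would force two distinct gadgets to be completed by compatible vertices of $U$ whose overlap, via the two-path flexibility of the first step, assembles a copy of $C$ inside $U$ --- the desired contradiction.

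The main obstacle I anticipate is \emph{controlling the embeddings}. Weak universality supplies only \emph{some} embedding of each $C$-free graph, so a priori the copies of the shared spine can move, and a naive count of neighborhoods over a fixed vertex set of $U$ fails; this is precisely the difficulty that already makes the non-existence of a universal $C_4$-free graph nontrivial. Overcoming it requires engineering the spine to be sufficiently \emph{rigid}, so that its image in $U$ is essentially pinned down and the per-gadget choices can be read off as genuine local data at fixed vertices, and then running the collision argument as a K\"onig's-lemma limit through a tree of finite approximations. Verifying three things will be the technical heart of the argument: that the rigid spine and every finite approximation are $C$-free, that the tree of choices really carries uncountably many pairwise-incompatible completions, and that a colliding pair of completions yields an honest copy of $C$ rather than a proper subgraph or a different graph. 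It is in these verifications that the hypotheses do their work --- incompleteness of $B$ provides the split that makes the whole construction flexible, while the assumption that the block tree is a \emph{path} keeps the recombination one-dimensional, so that two simultaneous completions fuse into exactly $C$; for a branching block tree the interaction of completions would itself branch and this clean closing-up would break down.
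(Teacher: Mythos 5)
Your proposal is, in essence, an attempt to run the F\"uredi--Komj\'ath construction (splitting an incomplete block along a non-edge, building continuum-many ``choice'' graphs over a rigid template, and forcing a collision inside a hypothetical universal graph) directly on the whole block path, in one shot, for an arbitrary incomplete block $B$. This is the right strategy for the \emph{base case}, but it does not prove the theorem, and the gap sits exactly at the claim you state without justification: ``the block-path hypothesis is exactly what guarantees that such a reattachment closes up into precisely a copy of $C$ and not some unrelated supergraph.'' The danger is not unrelated supergraphs; it is that (i) your gadget graphs may fail to be $C$-free in the first place, and (ii) a collision may fail to produce $C$, because the incomplete block $B$ can itself embed as a subgraph into \emph{other} blocks of $C$ (e.g.\ a $4$-cycle inside a large complete block), so split copies of $B$ recombine with pieces of other blocks and of neighboring gadgets in uncontrolled ways. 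This is precisely why the hypergraph method, applied directly, only yields the case of the \emph{smaller} extremal block: the paper's Lemma~\ref{2Blocks} assumes $|B_1|\le|B_2|$ and concludes completeness only of $B_1$, and F\"uredi--Komj\'ath's own extension needs the block to be ``distinguished'' (embeddable in no other block). Your sketch never confronts this, and the three verifications you defer (``$C$-freeness of the stages,'' ``incompatibility,'' ``collisions yield an honest copy of $C$'') are exactly the points that fail in general. Likewise, the ``rigidity of the spine'' you invoke is not engineering that can be assumed: in the paper it is the content of the high-girth $k$-uniform hypergraph lemma together with the claim that two embeddings agreeing on an initial segment agree everywhere, and that claim itself uses non-completeness of the block being analyzed.

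What is genuinely missing is the inductive architecture that the paper uses to reduce to the situation where the direct construction works. The paper takes a counterexample with the number of blocks minimal and applies two pruning techniques: \emph{corner pruning} (if $\Gamma$ is universal $C$-free, then the vertices over which the pruned corners re-attach form a universal graph for the pruned constraint), which forces the minimal counterexample into a rigid shape --- $B_1$ incomplete, $\ell\ge 3$, interior blocks complete, plus a weak symmetry condition $(5)$ --- and then \emph{symmetric local pruning} (a detachability argument, the most delicate step), which peels off $(B_3,\dots,B_\ell)$ and produces a universal graph for the two-block constraint $(B_1,B_2\setminus\{v_2\})$, at which point the hypergraph base case applies and gives the contradiction. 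Without some substitute for this reduction, your construction has to work when the incomplete block is interior (so a ``completion'' needs both sides of $C$ simultaneously), and when $B$ embeds into its neighbors --- cases where the collision argument as you describe it breaks down. So the proposal captures the correct base-case technique but has a genuine gap where the paper's two pruning lemmas do their work.
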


The terminology will need to be explained in detail, but we first lay out the context for the result.
The general plan of attack referred to consists mainly of the following two conjectures.

\begin{conjecture}
Let $C$ be a finite connected graph allowing a countable universal $C$-free graph.
Then 
\begin{itemize}
\item {(\it Solidity Conjecture) }The blocks of $C$ are complete;
\item ({\it Pathlike Conjecture}) $C$ may be obtained from a block path by adjoining paths to some of the vertices, with at most one path joined to each vertex.
\end{itemize}
\end{conjecture}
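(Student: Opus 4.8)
The plan is to prove both halves of the conjecture by the mechanism that already underlies the Main Theorem: assuming a countable (weakly) universal $C$-free graph $U$ exists, one manufactures an \emph{uncountable} family of $C$-free graphs whose simultaneous embedding into $U$ is impossible, because embedding them forces $U$ to carry uncountably many mutually incompatible local configurations at a single vertex, while any two incompatible configurations that collide (forced by countability of $U$) already complete a copy of $C$. The Main Theorem supplies this argument when the block-cut tree of $C$ is a path. The task is therefore twofold: (i) for the Solidity Conjecture, to free that argument from the block-path hypothesis, allowing the block-cut tree to branch; and (ii) for the Pathlike Conjecture, to show that even with all blocks complete, any branching beyond a single pendant path per vertex reintroduces the obstruction.

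For the Solidity Conjecture with $C$ an arbitrary finite connected graph, suppose some block of $C$ is not complete and choose such a block $B$ extremal in the block-cut tree, so that $B$ meets the rest of $C$ through a single cut vertex $c$ (if $B$ is central, root the tree at $B$ and work outward). Fix non-adjacent $u,v \in B$; two-connectivity of $B$ furnishes two internally disjoint $u$--$v$ paths, the gadget that drives the obstruction. The heart of the reduction is a decoration lemma: I would attach to each block-path witness graph $G$ a fixed family of rooted copies of the branches of $C$ hanging off $B$, rooted so that every copy of the relevant block-path $P$ inside $G$ extends to a copy of all of $C$, while no new copy of $C$ is created that does not pass through such a $P$. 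On the decorated class, ``$C$-free'' then coincides with ``$P$-free'', and the Main Theorem's obstruction transfers verbatim. Non-completeness of $B$ together with two-connectivity is exactly what guarantees enough routing freedom to attach the branches inertly; this step, rather than the collision argument, is where the real work lies.

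For the Pathlike Conjecture one may assume solidity, so all blocks of $C$ are complete, and ask which block-cut trees survive. The claim is that survival forces a block path with pendant paths attached, at most one per vertex. I would prove the contrapositive: if some vertex $x$ of $C$ carries two non-trivial branches (equivalently, the block-cut tree has a vertex of effective degree $\geq 3$ once bare pendant paths are accounted for), then $x$ provides two independent directions in which $C$ can grow, and forbidding $C$ imposes a product-type constraint---realizing one direction for each coordinate of a pair drawn from an uncountable family---that no countable $U$ can satisfy, again by the collision argument. The delicate point is calibrating the borderline exactly: a single pendant path per vertex must be shown tolerable, and here a positive construction (an explicit amalgamation producing a universal graph in the admissible complete-pathlike case) is needed, while two branches, or one branch that is not a bare path, must be shown fatal.

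The principal obstacle throughout is the decoration/reduction step for Solidity: one must attach the off-path branches of $C$ to every witness graph uniformly, keeping the enlarged graphs $C$-free while ensuring that any embedding still forces the path obstruction and that the branches create no spurious copies of $C$. Controlling spurious copies is genuinely hard when the branches are rich, and it is precisely here that the two-connectivity and non-completeness of $B$ must be leveraged in full. For the Pathlike Conjecture the analogous obstacle is the two-sided calibration at the threshold; unlike Solidity it demands a matching positive construction, and I expect that construction to be the more substantial undertaking of the two.
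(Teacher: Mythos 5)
You should first be aware that the statement you were given is not a theorem of the paper: it is explicitly labelled a conjecture, and the paper proves only the special case in which the tree of blocks of $C$ is a path (the Main Theorem), leaving both the Solidity and Pathlike Conjectures open. So there is no proof in the paper to match yours against, and any correct argument would be new mathematics. Judged on its own terms, your proposal is a program rather than a proof, and the two steps you yourself flag as ``where the real work lies'' are precisely the open content. Your decoration lemma for Solidity --- attach the off-path branches of $C$ to every witness graph so that on the decorated class ``$C$-free'' coincides with ``$P$-free'' and the Main Theorem transfers verbatim --- is asserted, not proved, and there is good reason to doubt it can be proved at this level of generality: the paper's own reduction machinery (the Pruning Induction) runs in the opposite direction, from a universal $C$-free graph to a universal graph for the \emph{pruned} constraint, and even in the block-path case it required careful normalization (minimality of $\ell$, the hypothesis $|B_1|\le|B_\ell|$, the embedding condition on $B_\ell$ into $B_1$, and the weak symmetry conclusion (5) feeding into symmetric local pruning) to prevent pruning from deleting the incomplete block or destroying the path structure. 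When the block tree branches, corners can embed into one another in ways that defeat exactly the control you need; ``routing freedom'' from non-completeness and 2-connectivity of $B$ is not a mechanism, and controlling spurious copies of $C$ through the decorations is the entire problem, as you concede.

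For the Pathlike Conjecture there is also a misreading of the statement. The conjecture asserts only a \emph{necessary} condition: if a universal $C$-free graph exists, then $C$ is pathlike. No ``matching positive construction'' for the admissible complete-pathlike graphs is needed to prove it --- indeed the paper says explicitly that, granted both conjectures, the analysis of the pathlike graphs remains as a separate further project. Conversely, the direction you do need (a vertex carrying two non-trivial branches is fatal) is dismissed in your sketch with an appeal to ``the collision argument,'' but this cannot transfer verbatim: the case where every block is $K_2$, i.e.\ $C$ a tree, is exactly the Pathlike Conjecture for trees, and its resolution in \cite{CS05} (universal forces a path or near-path) required leaf pruning plus a base-case analysis of some fourteen configurations. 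That the all-complete-blocks instance of your ``easy'' half already consumed a substantial paper is strong evidence that both halves of your program are missing their key ideas, not merely their details.
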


Our main theorem may be phrased as follows: the second conjecture implies the first.
Given both conjectures, what then remains is to analyze the graphs referred to as ``pathlike''
here, under the assumption that all blocks are complete.

Recall that the blocks of a graph are its maximal 2-connected subgraphs, and that there is associated
to any connected graph a ``tree of blocks'' whose vertices are the blocks and cut vertices of the original graph, with edges corresponding to incidence. By a {\it block path} we mean a graph whose tree of blocks forms a path. Such graphs are certainly pathlike, but we also will need to consider 
block paths with ``whiskers,'' as described in the second conjecture.

Now we review the terminology relating to universality. 

\begin{definition*}[Universality]
With $C$ a given {\it forbidden}
subgraph, a graph $\Gamma$ is  {\it $C$-free} if $\Gamma$ contains no subgraph isomorphic to $C$.

If $\Gamma$ is countable and $C$-free, then $\Gamma$ is
\begin{itemize}
\item {\it strongly universal} 
if every countable $C$-free graph is isomorphic to an induced subgraph
of $\Gamma$;
\item {\it weakly universal}
if every countable $C$-free graph is isomorphic to a  subgraph
of $\Gamma$
(such an isomorphism will be called an embedding, or more explicitly, 
an embedding as a subgraph).
\end{itemize}
\end{definition*}

The most natural notion of universality is the strong version. 
In proving the existence of universal
graphs we always aim at strong universality. But when proving nonexistence it is 
more satisfactory to prove the nonexistence of a {\it weakly} universal graph, and as the marginal
cost of this refinement tends to be low, this is what we usually aim at. 
In particular, we have stated the main theorem
in this sharper form. We also stress that we work 
with the class of countable $C$-free graphs throughout. 
Other cases are of interest but involve different issues and a broad range of techniques
(cf.~\cite{KP91,Dz05}).

The proof of the main theorem is not very complicated. We use two techniques: the {\it hypergraph
method} of F\"uredi and Komj\'ath \cite{FK97a}, and two {\it pruning techniques}, one of which has made
an appearance in \cite{CS05} in a more specialized form.
The pruning techniques allow a certain kind of inductive argument to be carried out, typically reducing
a riot of uncontrollable cases to a menagerie of manageable cases. We expect to make substantial further use of the pruning method again in more complicated situations, so one of our goals here is
to set this up for future reference.

We view the explicit classification of all finite connected constraint graphs $C$ allowing a universal
$C$-free graph as an ambitious classification project.  But what really interests us
is the following more qualitative question.

\begin{problem}[Universality with finitely many Forbidden Subgraphs]
Is there an algorithm which will decide, for any finite set $\CC$ of finitely many connected graphs,
whether there is a universal $\CC$-free graph?
\end{problem}

This problem can be approached from many sides. 
It can be shown that if one forbids not just subgraphs,
but induced subgraphs, then one arrives at a still more general question, but one for which
there is a direct proof of {\it algorithmic undecidability} by encoding Wang's domino problem
\cite{Ch11}. 
On the other hand if one restricts attention to graphs of bounded degree---that is, one allows a finite set of forbidden connected graphs, one of which is a star---then
the problem becomes decidable, though one would not expect a completely explicit classification
at that level of generality.

\subsection*{\quad Some prior work}

For the purposes of the present article, the outstanding prior result
is due to F\"uredi and Komj\'ath.

\begin{fact*}[2-Connected Constraints, \cite{FK97a}]\label{FurediKomjath}
Let $C$ be a finite 2-connected graph for which there is a weakly universal $C$-free graph.
Then $C$ is complete.
\end{fact*}

Conversely, it is very well known that when $C$ is complete, there is indeed a universal $C$-free graph
\cite{He71}.

We will present the  F\"uredi--Komj\'ath {\it hypergraph technique} in detail, 
and make further use of it, in \S1. This technique uses a certain hypergraph with good properties
as a template for a construction.

F\"uredi and Komj\'ath stated a more general result.
Call a block of $C$ {\it distinguished} if it embeds as a subgraph in no other block (in particular it
may not be isomorphic to another block). 
They show that the existence of a $C$-free universal graph forces the
distinguished blocks of $C$ to be complete. As it turns out, we need a different variation on their
theme.

A result whose relevance to our current concerns is less obvious is the following.

\begin{fact*}[Tree constraints, \cite{CT07,CS05}]
Let $C$ be a finite tree for which there is a universal $C$-free graph.
Then $C$ is either a path, or may be obtained from a path by adjoining one further edge.
\end{fact*}

The point here is the method of proof used, rather than the result obtained. 
The analysis made use of a form 
of what we will refer to as {\it corner pruning} below. We also introduce a second pruning technique
we call {\it symmetric local pruning},  which has not previously seen the light of day.

The pruning method supports an inductive analysis, 
which in the case of one forbidden tree led us to consider 
14 possible cases as the base of the induction.  
One may wonder whether one can deal
with an arbitrary constraint graph by such a method.  We believe this is feasible.

A larger view of the wide range of problems connected with universal graphs  may be found in the surveys
\cite{KP91,Dz05}. 
\relax From the point of view of those surveys, the restriction to {\it countable} universal graphs determined by a {\it finite} set of constraints covers many cases of interest, but by no means all.
The article \cite{Ch11} is in large part a survey of the universality problem 
restricted to the countable setting, allowing only a finite number of constraints.
By restricting to this narrower context we reach a clear algorithmic problem which seems to us
to raise a fundamental issue about the subject: namely, is it an art or a science?


\section{The F\"uredi--Komj\'ath  Hypergraph Construction}

The proof of the main theorem will be given in three stages, where at each stage we first introduce
a general technique, then apply that technique to analyze a hypothetical minimal counterexample to 
our main theorem.

In the present section we will discuss the F\"uredi--Komj\'ath hypergraph construction and 
apply it to obtain  the following special case of our main theorem. Since F\"uredi and Komj\'ath alread
used their method to treat the case of a single block, this amounts to finishing the base case of
an inductive analysis to be taken up further in succeeding sections.

\begin{lemma}\label{2Blocks}
Let $C$ be a finite connected graph with exactly two blocks $(B_1,B_2)$,
and with 
$$|B_1|\le |B_2|$$
Suppose that there is a weakly universal $C$-free graph. 
Then $B_1$ is a complete graph.
\end{lemma}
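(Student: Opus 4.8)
The plan is to argue by contradiction: assume $B_1$ is not complete, fix a pair of non-adjacent vertices $a,b\in B_1$, and suppose that $U$ is a weakly universal $C$-free graph. Write $c$ for the cut vertex shared by $B_1$ and $B_2$. Since $B_1$ is $2$-connected and not complete, the F\"uredi--Komj\'ath Fact already rules out a weakly universal $B_1$-free graph, so the whole difficulty is that $U$ is required only to be $C$-free and may well contain copies of $B_1$; indeed when $B_1$ embeds into $B_2$ (the only case not already covered by the distinguished-block form of their result) such copies are unavoidable. My strategy is to convert the appearance of a copy of $B_1$ into the appearance of a copy of $C$, by hanging a private copy of $B_2$ off of every vertex, and then to run the F\"uredi--Komj\'ath hypergraph construction for the single block $B_1$ inside $U$.

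First I would set up the gadget. For a $B_1$-free graph $G$, let $G^{+}$ be obtained by attaching to each vertex $v$ of $G$ a fresh copy of $B_2$, meeting $G$ only in $v$ and with $v$ playing the role of $c$. I claim $G^{+}$ is $C$-free. The key numerical point is that $|C|=|B_1|+|B_2|-1>|B_2|$, so no single copy of $B_2$ can contain $C$. A copy of $C$ in $G^{+}$ would contain a copy of the $2$-connected graph $B_1$, which must lie inside a single block of $G^{+}$; the blocks of $G^{+}$ are the blocks of $G$ (which are $B_1$-free) together with the attached copies of $B_2$, and a short block-analysis, using the displayed inequality and the fact that distinct blocks meet in at most a cut vertex, shows that the required $B_1$- and $B_2$-parts of $C$ cannot be placed. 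Hence $G^{+}$ is $C$-free and embeds into $U$, say by $\phi$. Now if the induced image $U[\phi(V(G))]$ were to contain a copy of $B_1$, its $c$-vertex would be some $\phi(w)$; the image under $\phi$ of the copy of $B_2$ attached at $w$ meets $\phi(V(G))$ only in $\phi(w)$, and so completes the copy of $B_1$ to a copy of $C$ in $U$, a contradiction. Thus every $B_1$-free graph embeds into $U$ with $B_1$-free induced image.

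The final step is to violate this last conclusion using the hypergraph method. Built against the fixed countable graph $U$, the F\"uredi--Komj\'ath construction for the $2$-connected non-complete graph $B_1$ produces a $B_1$-free graph $G^{*}$ (templated on a hypergraph of large girth) with the property that in \emph{any} embedding into $U$ the induced image must contain a copy of $B_1$: high girth forbids accidental copies of $B_1$ inside the template, while the countability of $U$ forces, by a pigeonhole argument on the images of the hyperedges, two of them to overlap in a pattern that --- because $B_1$ has the non-edge $ab$ --- fills in to a genuine copy of $B_1$. Applying this to $(G^{*})^{+}$, which is $C$-free by the previous paragraph, yields a copy of $B_1$ in $U[\phi(V(G^{*}))]$ and hence, as above, a copy of $C$ in $U$, the desired contradiction; so $B_1$ is complete.

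I expect the main obstacle to be exactly this last construction: arranging the high-girth template so that it is provably $B_1$-free yet cannot be realized $B_1$-freely inside the single countable host $U$, and pinning down the overlap pattern that is forced to reconstruct $B_1$. The secondary technical point, which is what genuinely goes beyond the F\"uredi--Komj\'ath distinguished-block theorem, is the $C$-freeness of $G^{+}$ in the case where $B_1$ embeds into $B_2$; there the attached copies of $B_2$ do contain copies of $B_1$, and it is only the size inequality $|C|>|B_2|$, together with the block structure, that prevents these from combining with the rest of $G^{+}$ to form $C$.
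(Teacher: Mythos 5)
Your first step is essentially sound and is genuinely different from anything in the paper: attaching a fresh copy of $B_2$ at every vertex of a $B_1$-free graph $G$ does yield a $C$-free graph $G^{+}$, whence an embedding of $G$ into $U$ whose induced image is $B_1$-free. (One caution on the justification: the inequality $|C|>|B_2|$ does not by itself close the case where the $B_1$-block of a putative copy of $C$ sits inside an attached copy of $B_2$, rooted at the attachment vertex, while the $B_2$-block sits inside $G$; what saves you there is that this case requires $B_1$ to embed in $B_2$, and then $B_1$-freeness of $G$ already forces $B_2$-freeness of $G$.)

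The final step, however, has a fatal gap, and the strategy cannot be completed in the factored form you propose. First, the pigeonhole you invoke does not exist: an embedding of a single countable graph $G^{*}$ into the countable graph $U$ is injective, so images of distinct hyperedges overlap exactly as much as the hyperedges themselves do, and nothing forces two of them to collide. In F\"uredi--Komj\'ath, and in the paper, the pigeonhole runs across \emph{continuum many embeddings of continuum many decorated variants} $G_\varepsilon$: only countably many restrictions to the finite set $\{u_i : i<N\}$ are possible, so two embeddings of \emph{distinct} variants agree there; a rigidity claim (proved using the $C$-freeness of the host) propagates the agreement; and the differing decorations then manufacture a copy of $C$. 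Second, and more fundamentally, the intermediate property you reduce to --- every countable $B_1$-free graph embeds into $U$ with $B_1$-free induced image --- is not refutable at all: the Rado graph satisfies it, since every countable graph embeds there as an induced subgraph. Hence no argument from that property plus countability can reach a contradiction; the $C$-freeness of $U$ must be used again \emph{after} your reduction, which is precisely what the paper's integrated construction does. Note also two further mismatches with the corrected (family) version: the copy of $B_1$ it produces straddles the images of two different embeddings, so your per-embedding property is never violated; and converting that copy of $B_1$ into a copy of $C$ needs an attached structure that may meet the new near-clique in one vertex besides the root --- this is exactly why the paper attaches cliques of order $n_2+1$ (one vertex of slack) rather than exact copies of $B_2$, and why it needs the second layer of decorations (an edge versus $B_1$ minus an edge on $E^2$).
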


We first present the hypergraph construction of F\"uredi and Komj\'ath 
\cite{FK97a}.

\goodbreak
\subsection{The Hypergraph construction}

\begin{definition}
\indent\par
A {\em cycle} in a hypergraph is a sequence of distinct
vertices and edges $(v_0,E_0,v_1,\dots,v_{n-1},E_{n-1})$ with
$v_i,v_{i+1}\in E_i$---here we take $(i+1)$ mod $n$, and $n\ge 2$.

The {\em girth} of a hypergraph is the length $n$ of the shortest cycle
(or $\infty$). 

A hypergraph is {\em $k$-uniform}
if its hyperedges consist of $k$ points.
\end{definition}

Note that if the girth of a hypergraph is greater than $2$ then distinct
hyperedges meet in at most one vertex. The following is a slight
variation on a result of \cite{FK97a}. 

\begin{lemma}\label{fk1:hypergraph}
For any $k,g$ with $k\ge 2$ there is some $N=N(k,g)$ and 
a $k$-uniform hypergraph of
girth at least $g$ on the vertex set $\Nn$, with hyperedges $E_i$ ($i\in
\Nn$)
satisfying
$$ N+(i-1), N+i\in E_i\includedin [0,\dots,N+i]$$
for all $i$.
\end{lemma}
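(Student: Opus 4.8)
The plan is to build the hyperedges $E_i$ one at a time by a greedy procedure, maintaining throughout two invariants: that the hypergraph built so far has girth at least $g$, and that every vertex has degree at most a fixed bound $D=D(k)$ (I will take $D=k+1$). The two "diagonal" vertices $N+i-1$ and $N+i$ of $E_i$ are forced, so the only freedom is in the remaining $k-2$ vertices, which I draw from $[0,\dots,N+i]$; the whole difficulty is to show that a safe choice always survives.

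First I record the local picture. Since $E_j\includedin[0,\dots,N+j]$ for every $j$, the vertex $N+i$ occurs in no earlier edge and so is isolated in the hypergraph $H_{i-1}$ built from $E_0,\dots,E_{i-1}$, while $N+i-1$ occurs there only in $E_{i-1}$ (as its top vertex) and thus has degree $1$. Next I isolate what must be avoided: a cycle of length $n$ through the new edge $E_i$ consists of two vertices of $E_i$ joined, off $E_i$, by a path of length $n-1$ inside $H_{i-1}$. Hence $H_i=H_{i-1}\cup\{E_i\}$ keeps girth $\ge g$ provided no two vertices of $E_i$ are joined by a path of length at most $g-2$ in $H_{i-1}$. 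As $N+i$ is isolated, every pair involving it is automatically harmless, so it suffices to choose the $k-2$ free vertices so that none of them, and not $N+i-1$, lies within path-distance $g-2$ of another chosen vertex (or of $N+i-1$).

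The decisive estimate is a ball-size bound. Under the maintained degree bound $D$, the set of vertices within path-distance $g-2$ of a fixed vertex has size at most some $M=M(k,g)$, by the usual branching count (at most $D$ edges through a vertex, at most $k-1$ new vertices per edge, iterated $g-2$ times). Choosing the free vertices one at a time, each committed vertex rules out at most $M$ candidates, so at most $(k-1)M$ vertices are forbidden for creating a short cycle. To protect the degree cap I must also avoid vertices already at degree $D$; since the total degree in $H_{i-1}$ is $ki$, there are at most $ki/D$ of these. Hence the number of usable vertices in the pool $[0,\dots,N+i]$ of size $N+i+1$ is at least
$$(N+i+1)-k-(k-1)M-\frac{ki}{D},$$
which, taking $D=k+1$, equals $N+1-k+\tfrac{i}{k+1}-(k-1)M$ and stays positive for every $i\ge 0$ as soon as $N>(k-1)M+k$. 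This both starts the induction (for small $i$ there are essentially no prior edges to obstruct the choice) and sustains it.

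The step I expect to be the main obstacle is exactly this uniform bookkeeping: the forbidden set grows with $i$ — through the accumulating edges and, a priori, through unbounded vertex degrees — so the count only closes if degrees are capped, yet capping is only legitimate if a capped choice always remains. The resolution is to run the two invariants, bounded girth and bounded degree, as a single simultaneous induction, so that the degree cap feeds the ball bound $M$ and $M$ in turn guarantees that a degree-respecting, cycle-free choice of the $k-2$ free vertices survives at each stage. Taking $D=k+1$ (any $D>k$ works) is precisely what makes the coefficient of $i$ positive, and lets one fixed $N=N(k,g)$ serve for all edges at once.
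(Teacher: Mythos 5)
Your proposal is correct and follows essentially the same route as the paper: a greedy inductive construction of the $E_i$ maintaining both the girth bound and a vertex-degree cap as simultaneous invariants, with a ball-size bound (from the degree cap) plus a degree-counting estimate showing that a safe choice of the $k-2$ free vertices always survives in the pool $[0,\dots,N+i]$ for one fixed $N=N(k,g)$. The only differences are cosmetic bookkeeping—the paper caps degrees at $k$ (so at most $i$ saturated vertices) and demands pairwise distance $>g$ in the shadow graph, while you cap at $k+1$ and isolate the sharper distance threshold $g-2$.
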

\begin{proof}
We will impose an additional condition on the hypergraph:
$$\hbox{No vertex belongs to more than $k$ hyperedges}$$

We proceed inductively. For the moment, let $N$ be arbitrary, and
suppose we have constructed a $k$-uniform 
hypergraph on $I_i=[0,\dots,N+i-1]$
with hyperedges $E_0,\dots,E_{i-1}$ satisfying all relevant
conditions up to this point. We wish to select $E_i$.

Let $V_i$ be the set of vertices in $I_i$ belonging to exactly $k$ of
the hyperedges $E_j$ for $j<i$. 
By counting all pairs $(u,E_j)$ with
$u\in I_i$, $j<i$, and $u\in E_j$ we find 
$|V_i|\le i$. Letting $V_i'=I_i\setminus V_i$ we have $|V_i'|\ge N$.
Notice that $N+i-1\in V_i'$ since this vertex belongs only to $E_{i-1}$.

Consider the ordinary graph $G_i$ induced on $I_i$ by taking edges
$(u,v)$ whenever $u,v\in E_j$ for some $j<i$. This has vertex degree
bounded by $k(k-1)$. Hence in the graph metric we have a bound on the
order of balls of radius $g$, and for $N$ sufficiently large we may
select a subset $X\includedin V_i'$ of cardinality $k-1$, with
$N+i-1\in X$, and with $d(u,v)>g$ in $G_i$ for $u,v\in X$.
Set $E_i=X\union \{N+i\}$. Then our conditions are all preserved.
\end{proof}

\subsection{Application: Lemma \ref{2Blocks}}

\begin{proof}[Proof of Lemma \ref{2Blocks}]
We have $C=(B_1,B_2)$, where $B_1,B_2$ are blocks meeting at the unique cut
vertex $v_*$ of $C$. Let $n_i=|B_i|$. We have assumed that 
$$n_1\le n_2$$

We will suppose that the block $B_1$ is not complete, and we aim to show that
there is no weakly universal $C$-free graph. As $B_1$ is not complete we have
$$n_1\ge 4$$

Let $k=n_1+1$, let $g$ be greater than the maximum order of a block of $C$,
and let $\Gamma$ be a $k$-hypergraph of girth at least $g$ 
with the properties of Lemma \ref{fk1:hypergraph}.
We will label the vertices of $\Gamma$ as  $(u_i:i\in \Nn)$ (so in fact $u_i=i$).

Divide each hyperedge $E$ of $\Gamma$ into
$$\mbox{$E=E^1\djunion E^2$ with $|E^1|=n_1-1$, $|E^2|=2$, and $(\max E)\in E^1$}$$

Let $G_0$ denote the graph on $\Gamma$ in which the induced graphs on each set $E^1$ are cliques
of order $n_1-1$, and there are no additional edges. Let $G_1$ be the graph obtained from $G_0$ by
attaching one clique $K_v$ of order $n_2+1$ freely to each  vertex $v$ of $G_0$.

It is clear that $G_1$ is $C$-free; we will need a sharper statement given as Claim \ref{Gepsilon} below.

\begin{claim}
Let $G$ be a $C$-free graph, let $n\in \Nn$, and let $f_1,f_2:G_1\into G$ be embeddings of $G_1$ into $G$ which agree on $u_i$ for $i<n$. Suppose there is a hyperedge $E$ with $u_n=\max E$.
Then $f_1,f_2$ agree on $u_n$. 
\end{claim}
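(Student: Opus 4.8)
The plan is to argue by contradiction: assuming $f_1(u_n)\neq f_2(u_n)$, I would assemble an embedding of $C$ into the $C$-free graph $G$, which is absurd. The guiding idea is to use the agreement of $f_1,f_2$ below $u_n$ to produce a large common clique, and then to exploit the fact that $B_1$ is \emph{not} complete so that a copy of $B_1$ can be built even though I have no control over whether the two images of $u_n$ are adjacent.

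First I would fix the common data. Since $u_n=\max E$ and the hyperedge $E=E_i$ satisfies $E\includedin[0,\dots,N+i]$ with $N+i=n$ by Lemma \ref{fk1:hypergraph}, every vertex of $E$ other than $u_n$ has index $<n$; in particular this holds for the $n_1-2$ vertices of $E^1\setminus\{u_n\}$, on which $f_1$ and $f_2$ agree by hypothesis. As $E^1$ is a clique of $G_0\includedin G_1$, its common image $S:=f_1(E^1\setminus\{u_n\})=f_2(E^1\setminus\{u_n\})$ is a clique of order $n_1-2$ in $G$, and both $p:=f_1(u_n)$ and $q:=f_2(u_n)$ are adjacent to every vertex of $S$. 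By injectivity $p,q\notin S$, and by assumption $p\neq q$, so $S\cup\{p,q\}$ consists of $n_1$ distinct vertices forming a clique except possibly for the pair $\{p,q\}$. Now, because $B_1$ is not complete, I would fix a non-adjacent pair $\{a,b\}$ in $B_1$, send $a\mapsto p$, $b\mapsto q$, and map $B_1\setminus\{a,b\}$ bijectively onto $S$. Every edge of $B_1$ is either internal to $B_1\setminus\{a,b\}$, landing in the clique $S$, or joins $a$ or $b$ to the rest, landing among the edges from $p$ or $q$ to $S$; the only pair that might fail to be an edge of $G$ is $\{p,q\}$, which corresponds to the non-edge $\{a,b\}$ and so is not required. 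Thus $S\cup\{p,q\}$ carries a copy of $B_1$ regardless of the adjacency of $p$ and $q$.

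Finally I would attach $B_2$ at the image $t$ of the cut vertex $v_*$. The vertex $t$ lies in $S\cup\{p,q\}$, hence equals $f_\epsilon(w)$ for some $\epsilon\in\{1,2\}$ and some $G_0$-vertex $w$ (with $\epsilon=1$ and $w\in E^1\setminus\{u_n\}$ if $t\in S$, with $w=u_n$ otherwise). The freely attached clique $K_w$ of order $n_2+1$ yields a clique $f_\epsilon(K_w)$ of order $n_2+1$ in $G$ through $t$. By injectivity of $f_\epsilon$, the $n_2$ vertices of $f_\epsilon(K_w)\setminus\{t\}$ avoid both $S$ and $f_\epsilon(u_n)$; the only vertex of the $B_1$-copy they might still meet is $f_{3-\epsilon}(u_n)$, so discarding at most one of them leaves $n_2-1$ vertices disjoint from the whole $B_1$-copy. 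Mapping $v_*\mapsto t$ and the remaining $n_2-1$ vertices of $B_2$ onto these realizes $B_2$ inside the clique, meeting the $B_1$-copy only in $t$. Since $C$ is obtained by gluing $B_1$ and $B_2$ along $v_*$, with no edges of $C$ running between $B_1\setminus\{v_*\}$ and $B_2\setminus\{v_*\}$, the two partial maps agree at $v_*\mapsto t$ and otherwise have disjoint images, so together they embed $C$ into $G$, contradicting $C$-freeness. Hence $p=q$, i.e. $f_1$ and $f_2$ agree on $u_n$.

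The step I expect to be the main obstacle is the disjointness bookkeeping at the end: guaranteeing that the attached copy of $B_2$ meets the copy of $B_1$ only at the image of $v_*$. This is precisely where the cliques are taken of order $n_2+1$ rather than $n_2$, since the one extra vertex provides exactly the room needed to dodge the single possible collision with the ``other'' image of $u_n$. Everything else (the clique $S$, the routing of the non-edge $\{a,b\}$ through $\{p,q\}$, and the embeddability of $B_2$ into a clique of order $\ge n_2$) is routine once this point is secured.
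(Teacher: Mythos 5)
Your proof is correct and follows essentially the same route as the paper: form the common clique image of $E^1\setminus\{u_n\}$, adjoin the two (possibly non-adjacent) images of $u_n$, route a non-edge of the incomplete block $B_1$ through that pair, and then complete to a copy of $C$ using the attached clique $K_w$ of order $n_2+1$, discarding the single possible collision with the other image of $u_n$. The only difference is that you spell out explicitly the two steps the paper leaves implicit (the embedding $j:B_1\to B$ and the reason $K$ meets $B$ in at most one vertex besides $t$), which is exactly the right bookkeeping.
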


We let $u=f_1(u_n)$ and $u'=f_2(u_n)$, which we suppose distinct. 
Let $B'=f_1[E^1\setminus \{u_n\}]=f_2[E^1\setminus \{u_n\}]$
and $B=B'\union \{u,u'\}$. 
Then the induced graph on $B$ in $G$ contains all edges except possibly
$(u,u')$. 
As $B_1$ is not complete, there is an embedding $j:B_1\to B$ as a subgraph. 
Let $v=j(v_*)$. 
Then $v=f_i(u)$ for some $u\in E^1$ and some $i=1$ or $2$ (or both).
Thus $K=f_i[K_u]$ is a clique of order $n_2+1$ containing $v$ and meeting $B$ in at most one other vertex. 
Let $K'\includedin K$ be a clique of order $n_2$ meeting $B$ in $\{v\}$ alone. Then $(B,K')$ contains a copy of $C$,
so $C$ embeds into $G$, and we have a contradiction. This proves our claim.

\medskip
Now we extend the graph $G_1$ in a variety of ways. 

Fix an edge $e$ of $B_1$ containing the cut vertex $v_*$ of $C$, and let $B_1'=B_1\minusedge e$.
Let  $\EE$ be the set of hyperedges of $\Gamma$.  
For $\varepsilon:\EE\to \{0,1\}$ arbitrary, 
we will extend $G_1$ to a graph $G_\varepsilon$ as follows.
\begin{itemize}
\item []For each hyperedge $E$ of $\Gamma$
\begin{enumerate}
\item If $\varepsilon(E)=1$ then attach an edge to $E^2$;
\item If $\varepsilon(E)=0$ then attach a copy of $B_1'$ to $E^2$ 
with the ends of the deleted edge $e$ corresponding to the vertices of $E^2$.
\end{enumerate}
\end{itemize}

\smallskip
\begin{claim}\label{Gepsilon}
Each of the graphs $G_\varepsilon$ is $C$-free.
\end{claim}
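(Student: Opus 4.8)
The plan is to rule out a copy of $C$ in $G_\varepsilon$ by first confining each of its two blocks to a single rigid ``gadget'' of the construction, and then checking that no pair of gadgets can carry both blocks at once.

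First I would record that $G_\varepsilon$ is assembled from three kinds of gadgets: the cliques on the sets $E^1$ (order $n_1-1$), the pendant cliques $K_v$ (order $n_2+1$), and the decorations on the pairs $E^2$ (an edge, or a copy of $B_1'$ of order $n_1$). Any two gadgets meet in at most one vertex, and each $K_v$ is pendant, attached only along $v$. The essential input is the girth of $\Gamma$: since $g$ exceeds the order of every block of $C$, any cycle of $G_\varepsilon$ of length $<g$ lies inside one gadget, for otherwise, reading off the hyperedges whose gadgets the cycle visits together with the vertices at which it crosses from one gadget to the next, one would obtain a cycle of $\Gamma$ of length $\ge g$. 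Hence every $2$-connected subgraph of order $<g$ --- in particular each block of a purported copy of $C$, of order $n_1$ or $n_2<g$ --- sits inside a single gadget.

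Now suppose $C$ embeds, with blocks $B_1^{*}\cong B_1$ and $B_2^{*}\cong B_2$ meeting in the image $v^{*}$ of the cut vertex $v_*$. Each $B_i^{*}$ lies in one gadget, and since $|C|=n_1+n_2-1>n_2+1$ (here $n_1\ge 4$) no gadget holds all of $C$; so $B_1^{*}$ and $B_2^{*}$ occupy distinct gadgets $P_1\ne P_2$ sharing $v^{*}$. I would then place $B_2^{*}$: the cliques $E^1$ and the edge-decorations are too small for a $2$-connected graph on $n_2\ge 4$ vertices, and a $B_1'$-decoration cannot hold $B_2$ either --- by vertex count when $n_1<n_2$, and by the choice of $e$ when $n_1=n_2$ (see below). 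So $B_2^{*}\subseteq K_v$ for some $v$, whence $v^{*}=v$ since $K_v$ meets the rest only in $v$. Finally $B_1^{*}$ lies in a gadget $P_1\ne K_v$ through $v$; as $K_v$ is the only pendant clique through $v$, $P_1$ is either a clique $E^1$ (order $n_1-1$, too small for $B_1$) or a decoration at $v$ (an edge, too small, or a copy of $B_1'$). But $B_1$ cannot embed as a subgraph in $B_1'=B_1\minusedge e$, since both have $n_1$ vertices while $B_1'$ has one fewer edge; this is the desired contradiction.

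The step I expect to cause the real trouble is the assertion that a $B_1'$-decoration cannot host $B_2$ in the tie case $n_1=n_2$. For an arbitrary edge $e\ni v_*$ this can fail: with $B_1=K_5$ minus an edge and $B_2=C_5$, every $B_1\minusedge e$ still contains a Hamiltonian $C_5$, so the naive construction would produce a copy of $C$. The construction must therefore choose $e$ --- and, when the blocks are tied in order, the labeling of $B_1,B_2$ --- with care. The clean remedy is to take $B_1$ to be a block of least edge-count, so that $|E(B_1')|=|E(B_1)|-1<|E(B_2)|$ and no subgraph copy of $B_2$ can fit in $B_1'$; this runs in parallel with the edge-count obstruction already used to forbid $B_1\hookrightarrow B_1'$. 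Confirming that this choice coexists with the standing assumption that $B_1$ is not complete, and with the earlier parts of the argument, is the point I would most want to pin down. The girth-localization step is routine by comparison, though it too requires attention: one must use that the clique on $E^1$ and the decoration on $E^2$ of a common hyperedge are vertex-disjoint, and that pendant cliques never lie on a cycle through two gadgets.
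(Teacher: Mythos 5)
Your argument is, in outline, the paper's own: the girth of the hypergraph confines each block of a putative copy of $C$ to a single piece of the construction, and then vertex and edge counts rule out every placement of the two blocks. (The paper localizes to the sets $\hat E$ and then implicitly to their blocks, which are exactly your gadgets, so that difference is purely presentational; your placement order---first $B_2$, then $B_1$---versus the paper's ``both blocks must land in pendant cliques, hence in the same one, which is too small'' is likewise cosmetic.)

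The substantive point is your treatment of the tie case $n_1=n_2$, and there you have caught a genuine gap in the paper. The paper's proof simply asserts that ``neither block $B_1,B_2$ can be mapped into a copy of $B_1'$''; this is an edge count for $B_1$, and a vertex count for $B_2$ when $n_1<n_2$, but when $n_1=n_2$ no reason is given and none exists. Your example is decisive: with $B_1$ equal to $K_5$ less an edge and $B_2=C_5$, the attached copy of $B_1'$ is $K_5$ less two edges, which always has a spanning cycle (necessarily through both vertices of $E^2$); mapping $B_2$ onto that cycle with the cut vertex going to a vertex $w\in E^2$, and mapping $B_1$ into the pendant clique $K_w$ of order $n_2+1=6$, produces a copy of $C$ in $G_\varepsilon$ whenever $\varepsilon(E)=0$. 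So for that labeling Claim~\ref{Gepsilon} is false as stated, and no cleverer choice of the edge $e\ni v_*$ can repair it---only the relabeling can. Your remedy is correct and does coexist with the standing hypotheses, which was the point you wanted pinned down: if the incomplete block $B_1$ had more edges than $B_2$, then $B_2$, having the same order and strictly fewer edges, is itself incomplete, so after relabeling $B_1$ is still incomplete (hence still $n_1\ge 4$); and then a subgraph embedding of $B_2$ into $B_1'$ would have to be a vertex bijection, impossible since $|E(B_1')|=|E(B_1)|-1<|E(B_2)|$. Nothing else in the proof of Lemma~\ref{2Blocks} is sensitive to the relabeling: Claim~1 and the concluding argument use only that $B_1$ is incomplete and $n_1\le n_2$. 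One small slip in your girth step: reading off the crossings of a short cycle yields a hypergraph cycle in $\Gamma$ of length \emph{less than} $g$ (bounded by the length of the graph cycle), and \emph{that} is the contradiction with girth $\ge g$; as written (``a cycle of length $\ge g$'') the contradiction is left implicit.
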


For $E$ a hyperedge of $\Gamma$, let $\hat E$ be the portion of $G_\varepsilon$ supported by $E$,
namely the induced graph on the union of $E$ together 
with all the attached cliques $K_v$ ($v\in E$), and
also (when $\varepsilon(E)=0$) the attached copy of $B_1'$. 
By the choice of the girth $g$, if $f:C\to G_\varepsilon$, 
then the image of either block $f[B_i]$ must lie in one of the sets $\hat E$ (not necessarily unique, 
since the attached cliques
$K_v$ are shared by several of the $\hat E$). 

As neither block $B_1,B_2$ can be mapped into a copy of $B_1'$, or into a clique of order $n_1-1$, or 
into a single edge (on $E^2$),
these blocks must both go into an attached clique $K_v$. 
But then they must go into the same clique $K_v$,
and as $n_1>2$ this is impossible.

Now the concluding argument follows a well worn path: namely, we have constructed
uncountably many suitably incompatible $C$-free graphs, and therefore no countable
$C$-free graph can be weakly universal. We give this final argument in detail.

Suppose there is a countable weakly universal $C$-free graph $G$, 
and choose embeddings
$f_\varepsilon:G_\varepsilon\to G$ of each $G_\varepsilon$ as a subgraph of $G$. 
Let $N$ be the parameter associated with the hypergraph $\Gamma$, with the property:
$$\mbox{For any $n\ge N$, there is a hyperedge $E$ of $\Gamma$ with $\max E=u_n$}
$$

As $C$ is countable,  there will be a pair of distinct $\varepsilon_1$, $\varepsilon_2$ for which the corresponding
embeddings agree on $u_i$ for $i<N$, and hence agree for all $i$ in view of Claim 1. 

Now consider a hyperedge $E$ with $\varepsilon_1(E)\ne\varepsilon_2(E)$.
We may suppose 
$$
\varepsilon_1(E)=0;\ \varepsilon_2(E)=1
$$
Then the image of $E^2$ in $G$ contains an edge. 
Now consider the copy $B_1^*$ of $B_1'$ attached to $E^2$ in $G_{\varepsilon_1}$, 
and the vertex $v^*$ of $B_1^*$ corresponding to the cut vertex of $C$. Write $f$ for $f_{\varepsilon_1}$.
Then $f[B_1^*\union K_{v^*}]$ contains a copy of $C\minusedge e$ where $e$, 
the deleted edge of $B_1$, corresponds to the edge on $f[E^2]$ in $G$. 
That is, we have now embedded
$C$ into $G$, arriving at a contradiction.
\end{proof}

\section{Corner Pruning}

We now aim at the following reduction of our main theorem.

The {\it length} of a block path will be defined as the number of blocks.

\begin{lemma}\label{CornerPruning}
Suppose that $C=(B_1,\dots,B_\ell)$ is a block path of length $\ell$
allowing a weakly universal graph, and having a block which is not complete.
Suppose further that the  length $\ell$ is minimal, that $|B_1|\le |B_\ell|$, and that
if $B_\ell$ is isomorphic to a subgraph of $B_1$, then $B_\ell$ is isomorphic to $B_1$.
For $i<\ell$, let $v_i$ be the cut vertex between $B_i$ and $B_{i+1}$.
Then the following hold.
\begin{enumerate}
\item $B_1$ is not complete
\item $\ell\ge 3$
\item $B_i$ is complete for $1<i<\ell$
\item If $B_\ell$ does not embed in $B_1$, then $B_\ell$ is complete
\item The induced subgraph $(B_1,B_2\setminus \{v_2\})$ of $C$ embeds into the induced subgraph
$(B_2\setminus \{v_1\},B_3,\dots,B_\ell)$.
\end{enumerate}
\end{lemma}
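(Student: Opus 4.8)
The plan is to argue by minimality, combining three inputs: the base case Lemma~\ref{2Blocks}, the F\"uredi--Komj\'ath theorem that a single non-complete $2$-connected constraint admits no weakly universal graph \cite{FK97a}, and the corner-pruning construction that is the subject of this section. Throughout, $C=(B_1,\dots,B_\ell)$ is a minimal counterexample with the stated normalizations $|B_1|\le|B_\ell|$ and ``$B_\ell\hookrightarrow B_1\Rightarrow B_\ell\cong B_1$''. The role of corner pruning is to convert a weakly universal $C$-free graph into a weakly universal graph for a \emph{strictly shorter} block path $C'$ obtained by deleting an end block: given a $C'$-free graph $H$, one reattaches copies of the deleted end block to $H$ so that the result is again $C$-free, embeds it into the fixed universal $C$-free graph $G$, and reads off an embedding of $H$ into a canonical $C'$-free subgraph of $G$. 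Minimality then forbids $C'$ from carrying a non-complete block.

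For (1), suppose $B_1$ were complete. Then $B_1$ is a safe corner and may be pruned, yielding a weakly universal graph for $C'=(B_2,\dots,B_\ell)$. Since $B_1$ was complete, the non-complete block of $C$ survives in $C'$; if $\ell\ge 3$ this contradicts minimality, while if $\ell=2$ then $C'$ is a single non-complete $2$-connected block, excluded by F\"uredi--Komj\'ath. Hence $B_1$ is not complete. For (2), note that $\ell=1$ is excluded by F\"uredi--Komj\'ath, and that if $\ell=2$ then Lemma~\ref{2Blocks}, applicable because $|B_1|\le|B_2|$, forces the smaller block $B_1$ to be complete, contradicting (1); therefore $\ell\ge 3$.

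Items (3) and (4) come from pruning the right-hand end and localizing the ``defect''. The normalization is exactly the condition under which $B_\ell$ is a safe corner: if $B_\ell$ does not embed in $B_1$, then reattaching copies of $B_\ell$ cannot manufacture a copy of any earlier block, so $B_\ell$ may be pruned, and minimality applied to $(B_1,\dots,B_{\ell-1})$ (which still carries the non-complete block $B_1$) forces $B_\ell$ to be complete; this is (4). For (3), one shows that in a minimal counterexample the non-complete blocks can only occur at the two ends: a non-complete interior block $B_i$ with $1<i<\ell$ would, after pruning away whichever end is absorbable past $B_i$, reappear as an end block of a strictly shorter block path that still carries a non-complete block, contradicting minimality. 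Hence $B_i$ is complete for $1<i<\ell$; in particular $B_2=K_{n_2}$.

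Item (5) is the heart of the lemma and the step I expect to be hardest. Using (3), the head $(B_1,B_2\setminus\{v_2\})$ is the two-block graph obtained by gluing the clique $K_{n_2-1}=B_2\setminus\{v_2\}$ to $B_1$ at $v_1$, while the tail $(B_2\setminus\{v_1\},B_3,\dots,B_\ell)$ is the block path with first block $K_{n_2-1}$ followed by $B_3,\dots,B_\ell$. The plan is to realize the embedding by placing the non-complete block $B_1$ onto the far end $B_\ell$: since $n_1\le n_\ell$ and, by (4), $B_\ell$ is either complete (hence contains $K_{n_1}\supseteq B_1$ as a subgraph) or isomorphic to $B_1$, there is a subgraph embedding $B_1\hookrightarrow B_\ell$ sending $v_1$ to the cut vertex $v_{\ell-1}$; the attached clique $K_{n_2-1}$ is then routed through the adjacent complete block $B_{\ell-1}$, which contains $v_{\ell-1}$, or, when $B_\ell$ is complete, through the spare vertices of $B_\ell$ itself. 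The delicate point---the main obstacle---is verifying the size inequalities that let the attached clique fit while reconciling the two cases of (4); I expect these bounds to be precisely what the normalization together with minimality supplies. An alternative, and perhaps cleaner, route is contrapositive: were the head not to embed in the tail, the head could itself be pruned off, yielding a strictly shorter block path with a weakly universal graph and a surviving non-complete block, again contradicting minimality. Either way, the crux is the interplay between the $C$-freeness requirement of the pruning construction and the embedding demanded in (5).
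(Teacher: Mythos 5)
Your arguments for (1)--(3) are workable (prune an end block, invoke minimality, and use Lemma~\ref{2Blocks} and F\"uredi--Komj\'ath to dispose of $\ell\le 2$), but your treatments of (4) and (5) both contain genuine errors, and in both cases the error is the same: you have the direction of pruning backwards. In the paper's formalism, pruning by a pointed corner $(u,S)$ deletes \emph{every} corner of $C$ that embeds into $(u,S)$ as a pointed subgraph; the issue is never whether ``reattachment manufactures a copy'' (that is automatic once one prunes the full set of such corners), but rather what else besides $S$ gets deleted. Thus for (4): pruning by $(v_{\ell-1},B_\ell)$ deletes $B_1$ as soon as $(v_1,B_1)$ embeds into $(v_{\ell-1},B_\ell)$, which cannot be excluded --- indeed it holds automatically if $B_\ell$ is complete, the very conclusion you are after --- so you cannot claim the pruned graph is $(B_1,\dots,B_{\ell-1})$. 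And even if it were, minimality would then give an outright contradiction (a strictly shorter block path carrying the incomplete block $B_1$ and admitting a weakly universal graph); it would not ``force $B_\ell$ to be complete,'' since $B_\ell$ is no longer present in the pruned graph. The normalization ``$B_\ell\hookrightarrow B_1\Rightarrow B_\ell\cong B_1$'' controls pruning in the \emph{other} direction: prune by $(v_1,B_1)$; then the only corners that can embed into it are $B_1$ itself and possibly $B_\ell$ (the latter only when $B_\ell\cong B_1$), so when $B_\ell$ does not embed in $B_1$ the pruned graph is exactly $(B_2,\dots,B_\ell)$, and minimality makes all of its blocks, including $B_\ell$, complete. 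This single pruning yields (3) and (4) at once, and then (1), since $C$ must have some incomplete block.

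For (5), your primary route fails exactly where you flag it: embedding $B_1$ into $B_\ell$ and routing the clique $B_2\setminus\{v_2\}$ through $B_{\ell-1}$ (or through spare vertices of $B_\ell$) requires $|B_2|-1\le|B_{\ell-1}|$ (resp.\ $|B_1|+|B_2|-2\le|B_\ell|$), and no such inequality follows from the hypotheses --- $B_2$ may be larger than every block to its right; moreover, when $B_\ell\cong B_1$ no isomorphism need carry $v_1$ to $v_{\ell-1}$. Your fallback contrapositive is again reversed: pruning off the \emph{head} $(B_1,B_2)$ at $v_2$ deletes $B_1$, the incomplete block, so the pruned graph (contained in $\{v_2\}\cup B_3\cup\dots\cup B_\ell$) will in general have all blocks complete, and minimality then yields no contradiction whatsoever. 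The argument that works prunes the \emph{tail}: prune by $(v_2,R_3)$, where $R_3=(B_3,\dots,B_\ell)$. Either $B_1$ survives the pruning --- then the pruned graph is a shorter block path with an incomplete block admitting a weakly universal graph, contradicting minimality --- or $B_1$ lies inside some deleted corner, necessarily of the form $L_j=(B_j,\dots,B_1)$, so that $(v_j,L_j)$ embeds into $(v_2,R_3)$ as a pointed subgraph. A case analysis on $j$ then gives (5): for $j\ge 2$ the head is contained in $L_j\setminus\{v_j\}$ and so lands inside $R_3\setminus\{v_2\}$, hence inside the tail; for $j=1$ one only obtains $B_1\hookrightarrow B_3$ with $v_1\mapsto v_2$, and one must use the completeness of $B_2$ (from (3)) to extend this across $B_2\setminus\{v_2\}\to B_2\setminus\{v_1\}$, sending $v_2$ to $v_1$. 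This dichotomy together with the case analysis is the missing content of (5).
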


We view the last of these conditions is a weak form of symmetry.  One case to keep in mind is that in which
the length is $3$ and $B_1,B_3$ are isomorphic.

\goodbreak
\subsection{The method of corner pruning}

\begin{definition}
\indent
\par
1. A {\em segment} of a graph $C$ is a connected subgraph which is  a union of blocks.

2. A {\em corner $C_v$} of a graph $C$ is a segment of the form $\{v\}\union C'$ where $v$ is a cut vertex and $C'$ is one of the  connected components of $C\setminus \{v\}$.
Note that $C_v$ contains a unique block $B$ of $C$ with $v\in B$, and that the pair $(v,B)$ 
determines the corner.
We call $v$ the {\sl root} of $C_v$, and $B$ its {\sl root block}.
Note that a corner will frequently be treated as a graph with base  point $v$ (or briefly: a {\em pointed graph}). For pointed graphs we use the notation
$$(v,C)$$
In particular, we may consider embeddings of one corner into another either as a subgraph, or as a pointed subgraph. 
\end{definition}

\begin{definition}[Pruning]
Let $\Sigma$ be a set of pointed graphs, $C$ a graph, and $\CC$ a finite set of graphs.

1. A corner $C_v$  of $C$ is {\em pruned by} a pointed graph $(u,S)$  if there is an embedding of $(v,C)$ into 
$(u,S)$ as a pointed subgraph.

2. The $\Sigma$-pruned graph $C_\Sigma$ is the graph obtained from $C$ by deleting the set of vertices in
$$\Union_{(u,S)\in \Sigma}\{C_v\setminus \{v\}\suchthat \mbox{$(v,C_v)$ is pruned by $(u,S)$}\}$$

Thus we do not delete the base point of a pruned corner, only the remainder.

3. $\CC_\Sigma=\{C_\Sigma\suchthat C\in \CC\}$

4. Generally we write $C'$ and $\CC'$ for the pruned graph or set of graphs, after specifying the set 
$\Sigma$.
\end{definition}

We focus here on single constraints $C$ and we prune by a single {\it minimal} corner.
But there is a distinction even in simple cases between pruning by a set of corners taken together,
and pruning by a sequence of corners individually and consecutively. We will not require 
the notion in full generality
for our present purposes, but this is likely to come into play in more elaborate analyses.

In \cite{CS05}, we dealt with  the case of one forbidden tree, and we pruned only leaves. 
The simplest case of a corner would be a block occurring as a leaf in the tree of blocks.
The proof of the next result is much the same as in \cite{CS05}.

\begin{lemma}[Pruning Induction]
Let $\CC$ be a set of graphs, and $\Sigma$ a set of pointed graphs. If there is a countable 
universal $\CC$-free graph (in either the weak or strong sense) then there is a countable universal
$\CC_\Sigma$-free graph.
\end{lemma}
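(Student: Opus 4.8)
The plan is to manufacture a universal $\CC_\Sigma$-free graph inside a given countable universal $\CC$-free graph $G$ by means of a \emph{decoration} operation that reinstates the pruned corners. For a graph $H$, let $D(H)$ be obtained by attaching, at every vertex $h$ of $H$ and for every \emph{prunable} corner $(v,C_v)$ (i.e. every corner deleted in passing from some $C\in\CC$ to $C_\Sigma$), infinitely many pairwise disjoint fresh copies of the tail $C_v\setminus\{v\}$, each glued to the rest only at $h$ (with $h$ playing the role of $v$) and otherwise disjoint from everything else. Then $H$ is an induced subgraph of $D(H)$, and $D(H)$ is countable whenever $H$ is. The two things I must extract from $D$ are a \emph{reconstruction} direction (a pruned copy plus decoration rebuilds $C$) and a \emph{deconstruction} direction (a copy of $C$ in $D(H)$ forces a copy of $C_\Sigma$ in $H$); the latter is the crux.

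The crux, which I expect to be the main obstacle, is: if $H$ is $\CC_\Sigma$-free then $D(H)$ is $\CC$-free. To prove it, suppose $\phi\colon C\into D(H)$ is a subgraph copy of some $C\in\CC$. Each attached tail meets the rest of $D(H)$ only at its root, so the block-tree of $D(H)$ is that of $H$ with pendant gadgets hanging off single vertices, and the image of any block of $C$, being $2$-connected, lies in a single block of $D(H)$. Hence the part of $\phi(C)$ inside a gadget rooted at $h=\phi(w)$ is a union of corners of $C$ at $w$, each mapping into $\{h\}\cup(\text{gadget})\cong C_v$ for some prunable $(v,C_v)$. Such a corner $(w,D_w)$ then satisfies $(w,D_w)\into(v,C_v)\into(u,S)$ for the witnessing $(u,S)\in\Sigma$, so $D_w$ is itself a prunable corner of $C$. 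Retracting every gadget-part of $\phi$ back to its root therefore deletes only prunable corners, and the resulting map embeds into $H$ a graph obtained from $C$ by deleting some of its prunable corners; that graph contains $C_\Sigma$ as a subgraph, contradicting $\CC_\Sigma$-freeness of $H$. Two points need care here: one must rule out a copy of $C$ lying \emph{entirely} inside a single gadget, which holds precisely because each tail embeds into the $\CC$-free witness $S$ and is thus itself $\CC$-free (this is exactly where the pruning data $\Sigma$ earns its keep); and the block-tree bookkeeping identifying gadget-parts with prunable corners must be done carefully.

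Granting the crux, I define $G'$ to be the induced subgraph of $G$ on the set $V'$ of vertices $g$ carrying, for each prunable corner $(v,C_v)$, infinitely many pairwise disjoint copies of $C_v\setminus\{v\}$ rooted at $g$ (as subgraphs of $G$). First, $G'$ is $\CC_\Sigma$-free: a subgraph copy of $C_\Sigma$ in $G'$ sends each pruned base point $v$ to some $g_v\in V'$, and since each $g_v$ carries infinitely many disjoint rooted tails I may greedily choose, for the finitely many pruned corners, tails that are pairwise disjoint and meet the $C_\Sigma$-copy only at the $g_v$; gluing them back reconstitutes a subgraph copy of $C$ in $G$, contradicting $\CC$-freeness of $G$. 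Second, $G'$ is universal: given a countable $\CC_\Sigma$-free graph $H$, the crux makes $D(H)$ a countable $\CC$-free graph, so $D(H)$ embeds into $G$; the images of the decorations witness that every vertex of $H$ lands in $V'$, so the restriction of this embedding carries $H$ into $G'$.

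Finally, the argument respects both senses of universality, and this is why $V'$ is framed in terms of disjoint rooted copies rather than free attachments. For weak universality the embeddings are as subgraphs throughout, and since the reconstruction step needs only a \emph{subgraph} copy of $C$, any extra edges acquired in $G$ are harmless; disjointness of the selected tails is all that is used. For strong universality the embedding of $D(H)$ is induced, and as $H$ is induced in $D(H)$ it remains induced in $G'$. The only real work is the block-structure analysis behind the crux together with the disjointness selection in the reconstruction, after which the proof runs parallel to the corresponding argument in \cite{CS05}.
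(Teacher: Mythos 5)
Your proof is correct, and its architecture is the same as the paper's: a decoration operator, the key claim that decorating a $\CC_\Sigma$-free graph produces a $\CC$-free graph, and then the universal candidate taken as the induced subgraph of $G$ on the sufficiently decorated vertices, with $\CC_\Sigma$-freeness proved by reattaching corners and universality proved by decorating and embedding. The one genuine difference is what gets attached: the paper's operator $G^+(\Sigma)$ glues on infinitely many copies of each pointed graph $(u,S)\in\Sigma$, whereas your $D(H)$ glues on copies of the pruned corner tails $C_v\setminus\{v\}$ themselves. This is not purely cosmetic, because it changes the delicate point in the crux, namely ruling out a copy of some $C\in\CC$ lying entirely inside one attached gadget. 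With your gadgets, in the single-constraint setting this case dies for a clean reason: the gadget together with its root is a corner of $C$, hence a \emph{proper} subgraph, and a finite graph cannot embed into a proper subgraph of itself. With the paper's gadgets one needs instead that no member of $\CC$ embeds into any $S$ --- a hypothesis not supplied by the definition of $\Sigma$, though it holds in the paper's applications, where $\Sigma$ consists of corners of $C$; so your variant is, if anything, slightly more robust. One caution: your stated justification for that step --- that each tail ``embeds into the $\CC$-free witness $S$ and is thus itself $\CC$-free'' --- is not warranted, since nothing in the definition of pruning makes the graphs in $\Sigma$ themselves $\CC$-free; the correct justification for your construction is the proper-subgraph argument just given. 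Relatedly, when $\CC$ has several members, both your argument and the paper's leave open the possibility that some $C\in\CC$ embeds into a gadget attached on behalf of a \emph{different} constraint; the paper glosses this equally, and it is vacuous in the intended application (a single constraint pruned by its own corners), so it is not a defect of your proposal relative to the paper.
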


\begin{proof}
If $G$ is any graph, define $G^+(\Sigma)$ as the graph obtained by freely adjoining infinitely many copies of each pointed graph $(u,S)$ in $\Sigma$ to each vertex $v$ of $G$, identifying $u$ and $v$.
(In particular for $v$ a single vertex, viewing $v$ as a trivial graph, we have the notation $v^+(\Sigma)$.)

If $G$ is $\CC_\Sigma$-free, then $G^+(\Sigma)$ is $\CC$-free: if $C\in \CC$ embeds as a subgraph into 
$G^+(\Sigma)$, then the part of $C$ lying in $G$ would contain $C_\Sigma$.

So now suppose there is a weakly or strongly universal $\CC$-free graph $\Gamma$,
and let $\Gamma_\Sigma$ be the induced graph on the set
$$\{v\in \Gamma\suchthat \mbox{$v^+(\Sigma)$ embeds into $\Gamma$ over $v$\}}$$
We will check that $\Gamma_\Sigma$ is $\CC_\Sigma$-free universal, in the same sense.

Certainly $\Gamma_\Sigma$ is $\CC_\Sigma$-free, as otherwise we could reattach the pruned corners in
$\Gamma$.

So let $G$ be $\CC_\Sigma$-free, and embed $G^+(\Sigma)$ into $\Gamma$, either as a subgraph or as an induced subgraph, as the case may be. Then $G$ goes into $\Gamma_\Sigma$.
\end{proof}

\begin{remark}
There is also some use for a more sensitive notion of pruning, in which we prune only segments which embed into the given pointed graphs as {\em induced subgraphs.} 
But this would be relevant only in
proving the nonexistence of strongly universal graphs, while we aim at proving nonexistence
for weakly universal graphs.
\end{remark}

The classification of forbidden trees $C$ allowing a universal $C$-free graph
comes down to the following, by leaf pruning.

\begin{fact*}[\cite{CS05}]
Let $T$ be a tree which becomes either a path or a near path on removal of its leaves.
If there is a weakly universal $T$-free graph, then $T$ is a path or a near path.
\end{fact*}

This amounts to the base of an induction, and occupies the bulk of \cite{CS05};  the rest of 
the induction is purely formal, as we have seen.

We will now proceed similarly with the proof of our main theorem. 
Corner pruning plus the hypergraph
construction will not do everything, but will leave a definite configuration suitable for further analysis
by a third method.

It should be noted that  the hypergraph construction could do a good deal more than we have done 
with it---but only via partial overlap with cases handled more thoroughly by pruning.

\subsection{Application: Lemma \ref{CornerPruning}}
\begin{proof}[Proof of Lemma \ref{CornerPruning}]

We are supposing that $C$ is  
a block path with blocks $(B_1,\dots,B_\ell)$, 
allowing a countable weakly universal $C$-free graph, and having some incomplete
block, with the length $\ell$  minimal. 

We  may suppose further that
$$|B_1|\le |B_\ell|$$
and that if $B_\ell$ embeds into $B_1$, then the two blocks are isomorphic.

By the result of F\"uredi and Komj\'ath, $\ell\ge 2$.

The block $B_1$ is a corner of $C$ and we may prune it. Our assumptions on $B_1,B_\ell$
imply that this pruning will remove only $B_1$ and possibly $B_\ell$, 
the latter only if $B_1$ and $B_\ell$
are isomorphic. What is left after pruning is a shorter block path with similar properties, so by the
minimality of $\ell$ all of the remaining blocks are complete: that is, $B_i$ is complete 
for $1<i<\ell$, and also $B_\ell$ is complete if $B_1$ and $B_\ell$ are not isomorphic.

Since $C$ has some incomplete block, it follows that $B_1$ is incomplete.
So at this point we have
\begin{itemize}
\item[]$B_1$ is incomplete
\item []$\ell\ge 3$ (Lemma \ref{2Blocks})
\item[] $B_i$ is complete for $1<i<\ell$; and for $i=\ell$ unless $B_1\iso B_\ell$
\end{itemize}

So points $(1-4)$ of Lemma \ref{CornerPruning} have been verified.

Our final claim $(5)$ is that we have an embedding of $(B_1,B_2\setminus \{v_2\})$ into
$(B_2\setminus \{v_1\},B_3,\dots,B_\ell)$, where $v_i$ denotes the cut vertex between
$B_i$ and $B_{i+1}$.

It will be useful to bear in mind that the corners of $C$ are its terminal
segments 
\begin{align*}
R_j=(B_j,B_{j+1},\dots,B_\ell)&\mbox{ with base point $v_{j-1}$, and}\\
L_j=(B_j,B_{j-1},\dots,B_1)&\mbox{ with base point $v_j$,} 
\end{align*}
where we write ``$R$'' and ``$L$'' to suggest ``right'' and ``left''. 

Now we prune the corner $R_3=(B_3,\dots,B_\ell)$ with base point $v_2$.
If $B_1$ remains after pruning, then by the minimality of the length $\ell$, $B_1$ is complete, a contradiction.

If $B_1$ does not remain after pruning, then it meets, and hence lies within, some corner pruned by $R_3$, which must be of the form
$L_j=(B_j,\dots,B_1)$ with base point $v_j$
(taking the blocks in reverse order). So $L_j$ embeds into the corner $R_3$, 
with $v_j$ corresponding to $v_2$. 

Suppose  first that $j=1$.  Then we have an embedding of $B_1$ into $B_3$ with $v_1$ going to $v_2$.
As $B_2$ is complete, we may extend this to an embedding of $(B_1,B_2\setminus \{v_2\})$ into
$(B_2\setminus \{v_1\},B_3)$, taking $v_2$ to $v_1$, proving our claim.

Now suppose $j>1$. Then our embedding takes $(B_2,B_1)$ into $(B_3,\dots,B_\ell)$
with $v_2$ fixed if $j=2$, and with $v_2$ not in the image if $j>2$. So claim $(5)$ 
holds in either case.
\end{proof}

We will see in the next section that the weak symmetry condition $(5)$ allows another kind of pruning.


\section{Local and symmetric Pruning}

In this section we aim to complete the proof of the main theorem by dealing
with the configuration described in Lemma \ref{CornerPruning}.
We introduce another, more subtle, pruning technique.
At this point we will confine our theoretical discussion 
to the case of a single constraint, though no doubt this 
 tool is useful in greater generality. 
 
\subsection {The method of local  pruning}

{\it Local} pruning is a way of removing a single corner. 
We first give the definition in pragmatic terms, and then look for reasonable
conditions sufficient for its application.

\begin{definition}
Let $(v,C_v)$ be a corner of the finite connected graph $C$. Let $C_v^+$ be the union of the other corners of $C$ rooted at $v$, and $C_v^-=C_v\setminus \{v\}=C\setminus C_v^+$.

1. For any graph $H$, the graph $\hat H=H*_v C_v^+$ is the suspension of $H$ with an attached
copy of $C_v^+$, constructed as follows. 
\begin{enumerate}[(a)]
\item Take the disjoint union $H\djunion C_v^+$
\item Connect $v$ to every vertex of $H$ by an edge.
\end{enumerate}

2. $(v,C_v^+)$ is said to be {\em detachable} if the following holds.
\begin{quotation}
Whenever $H$ is $C_v^-$-free, then $\hat H$ is $C$-free.
\end{quotation}
\end{definition}

\begin{lemma}Let $C$ be a finite connected graph, and $(v,C_v)$ a corner with
$(v,C_v^+)$ detachable. If  there is a countable universal $C$-free graph, in either the weak or strong sense,
then there is a countable universal $C_v^-$-free graph, in the same sense.
\end{lemma}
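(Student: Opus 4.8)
The statement to prove is a detachment analogue of the Pruning Induction Lemma: if $(v,C_v^+)$ is detachable and there is a countable universal $C$-free graph (weak or strong), then there is a countable universal $C_v^-$-free graph in the same sense. The natural plan is to mimic the proof of the Pruning Induction Lemma, replacing the ``reattach many copies of $\Sigma$'' construction with the single suspension operation $H \mapsto \hat H = H *_v C_v^+$ and its inverse (restricting to vertices that carry the attached $C_v^+$). The detachability hypothesis is precisely what is needed to make the forward direction ($C_v^-$-free $\Rightarrow C$-free after suspension) go through; in the general pruning lemma this was automatic from the definition of the pruned graph, whereas here it is imposed as a hypothesis.

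\medskip
Concretely, let $\Gamma$ be a countable universal $C$-free graph. First I would define, for an arbitrary $C_v^-$-free graph $H$, the suspended graph $\hat H = H *_v C_v^+$; by detachability $\hat H$ is $C$-free, so it embeds into $\Gamma$ (as a subgraph or induced subgraph according to which version we are proving). Next I would single out inside $\Gamma$ the induced subgraph $\Gamma'$ on the set of vertices $w$ such that the suspension $w *_v C_v^+$ (the single vertex $w$ with one attached copy of $C_v^+$, $w$ playing the role of $v$) embeds into $\Gamma$ over $w$ — this is the analogue of the set $\{v : v^+(\Sigma) \text{ embeds}\}$ used before. I would then check the two required properties of $\Gamma'$. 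For $C_v^-$-freeness of $\Gamma'$: if a copy of $C_v^-$ appeared in $\Gamma'$, then every one of its vertices carries an attached $C_v^+$ in $\Gamma$, and in particular the vertex playing the role of $v$ does, so one could reconstitute a full copy of $C = C_v^- *_v C_v^+$ inside $\Gamma$, contradicting that $\Gamma$ is $C$-free. For universality of $\Gamma'$: given any $C_v^-$-free $H$, embed $\hat H$ into $\Gamma$; the image of the $H$-part lands in $\Gamma'$, because each vertex of $H$ inherits, via the embedding, a copy of $C_v^+$ attached over it, so $H$ embeds into $\Gamma'$ in the appropriate sense.

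\medskip
The one genuinely delicate point is making the ``reconstitution'' step in the $C_v^-$-freeness argument rigorous: from the fact that each vertex of an alleged copy of $C_v^-$ in $\Gamma'$ individually carries a copy of $C_v^+$, I must produce a single embedded copy of the whole graph $C = C_v^- *_v C_v^+$. Here I need the attached $C_v^+$ over the distinguished vertex $v$ of the $C_v^-$-copy to be vertex-disjoint from the $C_v^-$-copy itself (so that gluing gives a genuine copy of $C$), and I should be careful that in the induced-subgraph (strong) case no spurious edges between the attached $C_v^+$ and the body of $C_v^-$ destroy the isomorphism type. Since $C_v^+$ is attached to $v$ alone and the definition of $\hat H$ only adds the edges from $v$ to $H$, the structure of $C$ is exactly $C_v^-$ with $C_v^+$ glued at $v$, so this matching of edge sets should be routine once the disjointness is arranged; but it is exactly where the weak-versus-strong distinction has to be watched, and where the detachability hypothesis is silently doing its work.

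<br>

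I would close by noting that the argument is uniform in the weak/strong choice: in both directions the embeddings are taken as subgraph embeddings or as induced-subgraph embeddings throughout, and no step mixes the two, so the conclusion is obtained in ``the same sense'' as claimed.
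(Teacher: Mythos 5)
Your universality direction is fine (and matches the paper's), but the freeness claim at the heart of your construction is false, and it is not the bookkeeping issue you flagged. The graph $C$ is \emph{not} obtained from $C_v^-$ by attaching $C_v^+$ at a single vertex: the vertex $v$ lies in $C_v^+$, not in $C_v^-$ (so ``the vertex of the $C_v^-$-copy playing the role of $v$'' does not exist), and in $C$ the vertex $v$ is adjacent to \emph{all} of its neighbors in the root block of $C_v$, which is in general several vertices of $C_v^-$. Your $\Gamma'$ records, for each vertex $w$ separately, a pendant copy of $C_v^+$ whose $v$-vertex is adjacent to $w$ alone; it provides no single copy of $C_v^+$ whose $v$-vertex is simultaneously adjacent to every vertex of a $C_v^-$-copy that requires it, so no copy of $C$ can be reconstituted. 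Concretely, let $C$ be the bowtie, two triangles $\{v,a,b\}$ and $\{v,c,d\}$ sharing $v$, with $C_v^+=\{v,c,d\}$ and $C_v^-$ the single edge $\{a,b\}$; detachability holds (if $H$ has no edge, the only triangle of $\hat H$ is the attached one). Now take the $C$-free graph $F$ consisting of an edge $\{w_1,w_2\}$, two disjoint triangles $T_1,T_2$ disjoint from $\{w_1,w_2\}$, and a single edge joining one vertex of $T_i$ to $w_i$ for $i=1,2$: its only triangles are $T_1,T_2$ and they are disjoint, so $F$ is bowtie-free, yet both $w_1$ and $w_2$ satisfy your membership condition and are adjacent, so $F'$ contains a copy of $C_v^-$. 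Since your freeness argument never uses universality, this refutes it outright; moreover any weakly (resp.\ strongly) universal $C$-free graph contains $F$ as a subgraph (resp.\ induced subgraph), so the defect transfers to $\Gamma'$ itself. The same failure occurs in the paper's intended application, where $v=v_2$ is adjacent to all of $B_2\setminus\{v_2\}\subseteq C_v^-$ as soon as $|B_2|\ge 3$; your scheme works only in the degenerate case where $v$ has a unique neighbor in $C_v^-$, which is exactly the corner situation already covered by corner pruning.

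The paper's proof therefore does not carve a subgraph out of $\Gamma$ vertex-by-vertex; it builds a different graph in which the needed adjacencies are imposed uniformly. For each embedding $h$ of $(v,C_v^+)$ into $\Gamma$ (as a subgraph in the weak case, as an induced subgraph in the strong case), let
\[
\Gamma_h=\{u\in\Gamma \colon u\notin h[C_v^+] \mbox{ and } (u,h(v)) \mbox{ is an edge}\}
\]
with its induced graph structure, and let $\Gamma_0$ be the \emph{disjoint union} of the $\Gamma_h$ over the countably many such $h$. Each $\Gamma_h$ is $C_v^-$-free: a copy of $C_v^-$ inside $\Gamma_h$ together with $h[C_v^+]$ is a copy of $C$ as a subgraph of $\Gamma$, because $h(v)$ is adjacent to \emph{every} vertex of $\Gamma_h$, so all edges between $v$ and $C_v^-$ are automatically present. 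Since $C_v^-$ is connected, $\Gamma_0$ is then $C_v^-$-free as well. Universality of $\Gamma_0$ is exactly your argument: for $H$ countable and $C_v^-$-free, detachability makes $\hat H=H*_vC_v^+$ $C$-free, an embedding of $\hat H$ into $\Gamma$ restricts on $C_v^+$ to some $h$, and the image of $H$ lands in $\Gamma_h$. The uniform adjacency to $h(v)$ built into each piece---which your pointwise definition of $\Gamma'$ cannot enforce---is the whole content of the construction.
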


\begin{proof}
Let $\Gamma$ be a universal $C$-free graph, in one of the two senses. 

For each embedding $h$ of $(v,C_v^+)$ into
$\Gamma$ as a subgraph, let $\Gamma_h$ be 
$$\{u\in \Gamma\suchthat \mbox{$u\notin h[C_v^+]$ and $(u,h(v))$ is an edge}\}$$

Let $\Gamma_0=\Djunion_h \Gamma_h$ (a {\it disjoint union}) with $h$ varying over weak embeddings (as subgraphs) if we are in the weak case, or over strong embeddings (as induced subgraphs) in the strong case.
We claim that $\Gamma_0$ is countable universal $C^-_v$-free, in the corresponding sense.

As $C^-_v$ is connected and the individual $\Gamma_h$ are $C^-_v$-free, the graph $\Gamma_0$ is
$C^-_v$-free. Now we check the universality.

If $H$ is any countable $C^-$-free graph 
then we form the extension $\hat H=H*_v C_v^+$ and by hypothesis $\hat H$ is $C$-free, hence embeds into $\Gamma$. Then this embedding takes $H$ into the corresponding induced subgraph 
$\Gamma_h$ in $\Gamma_0$.
\end{proof}

\subsection{A special case: Symmetric Local Pruning}

Now we become more concrete, in the context of block paths. 
We continue to work with the notation of the previous section.

In analyzing detachability,  we must pay particular attention to ``improbable'' embeddings of a given graph
$C$ in some graph of the form $\hat H$.

\goodbreak
\begin{lemma}[Symmetric Local Pruning]\label{SymmetricLocalPruning}
Let $C$ be a block path, $B$ a block of $C$ containing two cut vertices $u,v$, 
and let $L_u,R_u,L_v,R_v$ be the corners rooted at $u$ and $v$ respectively,  
with $R_u$ and $L_v$
the ones containing the block $B$.
Suppose that $L_v\setminus \{v\}$ embeds into $R_u\setminus \{u\}$. Then 
$(v,R_v)$ is detachable.
\end{lemma}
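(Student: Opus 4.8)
The plan is to prove the contrapositive of detachability: assuming $H$ is $C_v^-$-free, I will show that any embedding $\phi\colon C\hookrightarrow \hat H$ of $C$ as a subgraph forces a copy of $C_v^-=R_v\setminus\{v\}$ to appear inside $H$, contradicting the hypothesis on $H$. Here $C_v^+=L_v$ and $\hat H=H*_vL_v$, so $\hat H$ carries an apex $a$ (the base point of the attached copy of $L_v$) joined to every vertex of $H$. The first step is to record the block structure of $\hat H$: since the only edges between the attached $L_v$ and $H$ run through $a$, the apex $a$ is a cut vertex, $\hat H\setminus\{a\}$ is the disjoint union of $L_v\setminus\{v\}$ (the ``handle'') and $H$, and each connected component $H_0$ of $H$ becomes a single block $\{a\}\cup H_0$. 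Thus the block tree of $\hat H$ is a ``broom'': the block path of $L_v$ with these cone-blocks hanging off $a$.

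Next I would pull $\phi$ back through this cut vertex. The preimage $c^*=\phi^{-1}(a)$ is a single vertex (the empty case is disposed of at once by connectivity and a vertex count, since $\phi(C)$ cannot fit inside $L_v\setminus\{v\}$). Because deleting $a$ separates the handle from $H$, the vertex $c^*$ must be a cut vertex $v_m$ of the block path $C$, and the two corners of $C$ at $v_m$, namely $L_{v_m}$ and $R_{v_m}$, are distributed so that one maps into the handle ($\cong L_v\setminus\{v\}$) and the other into a single cone-block, hence into $H$. The workhorse is a cardinality bound: whichever corner lands in the handle has at most $|L_v\setminus\{v\}|$ vertices after deletion of $c^*$.

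With this the ``probable'' case is immediate. If $R_{v_m}\setminus\{v_m\}$ maps into $H$, then $L_{v_m}\setminus\{v_m\}$ maps into the handle, and the cardinality bound forces $m\le i$ (for $m>i$ the left corner is strictly larger than $L_v\setminus\{v\}$). Since $m\le i$ gives $R_v\setminus\{v\}\subseteq R_{v_m}\setminus\{v_m\}$, we obtain $R_v\setminus\{v\}\hookrightarrow H$ and are done. The borderline case $m=i$ of the opposite configuration is also clean: there $R_v\setminus\{v\}$ embeds into the handle while the left corner, which is exactly $L_v\setminus\{v\}$, embeds into $H$, and composing with the canonical isomorphism $(\text{handle})\cong L_v\setminus\{v\}$ transports $R_v\setminus\{v\}$ into $H$ with no appeal to the hypothesis.

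The main obstacle is the genuinely \emph{folded} case: $R_{v_m}\setminus\{v_m\}$ maps into the handle while $L_{v_m}\setminus\{v_m\}$ maps into $H$, with $m\ne i$. This is exactly where the symmetry hypothesis $L_v\setminus\{v\}\hookrightarrow R_u\setminus\{u\}$ must enter. The idea is to re-route the copy of the right corner out of the handle and into $H$: the handle is isomorphic to $L_v$, the hypothesis lets $L_v\setminus\{v\}$ be simulated inside $R_u\setminus\{u\}$, and for $m\ge i$ the left corner already present in $H$ contains a full copy of $L_v$ into which the handle, and with it $R_{v_m}\setminus\{v_m\}$, can be folded. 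The delicate point — which I expect to be the crux — is reassembling these two pieces into a \emph{single} copy of $R_v\setminus\{v\}$ inside $H$: their shared vertex is the image of $c^*=v_m$, which was sent to the apex $a\notin H$, and the two cut vertices $u,v$ of the central block $B$ play asymmetric roles on the two sides. It is precisely the embedding $L_v\setminus\{v\}\hookrightarrow R_u\setminus\{u\}$ that is meant to realign these roles so that the reassembled copy closes up within $H$, and the careful bookkeeping of which cut vertex of $B$ is deleted on each side is where the real work of the lemma lies.
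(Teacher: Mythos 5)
Your proposal has the construction backwards, and this reversal is fatal. In the paper's definition, ``$(v,R_v)$ is detachable'' means $C_v^+=R_v$: one attaches a copy of $R_v$ to $H$ (apex $=$ the root $v$, joined to all of $H$) and assumes $H$ is $L_v^-$-free, aiming to show $\hat H=H*_v R_v$ is $C$-free. The Remark following the lemma says this explicitly (``detaching $R_v$ leaves $L_v^-=L_v\setminus\{v\}$''), and the application requires it: the Main Theorem needs a weakly universal $L_2^-$-free graph, since $L_2^-=(B_1,B_2\setminus\{v_2\})$ contains the incomplete block $B_1$ and feeds the induction. You instead attach $L_v$ and assume $H$ is $(R_v\setminus\{v\})$-free, i.e.\ you set out to prove detachability of $(v,L_v)$ --- a different statement, which is not what the lemma asserts and would be useless for the application. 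Worse, with your orientation the symmetry hypothesis $L_v\setminus\{v\}\hookrightarrow R_u\setminus\{u\}$ points the wrong way: its entire function in the paper is to convert copies of $R_u$ (or $R_u$ minus a non-root vertex) found inside $H$ into forbidden copies of $L_v^-$; it cannot manufacture copies of $R_v^-$, which is what your setup would need.

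Even granting your setup, two structural claims fail and the crux is left unproven. First, the dichotomy ``$c^*=\phi^{-1}(a)$ must be a cut vertex $v_m$, with one corner into the handle and the other into $H$'' is false: the apex dominates $H$, so $C$ can embed with the apex as the image of an \emph{arbitrary} vertex and all of $C\setminus\{c^*\}$ inside $H$. This is exactly the paper's Case~1 (the subcase $v_0\in f[B]$, $v_1=f^{-1}(v_0)$ arbitrary in $B$), which is dispatched using completeness of $B$ together with the symmetry hypothesis --- machinery absent from your outline. Second, the corner meeting the handle need not be rooted at the apex: the paper's Case~3 ($v_0=f(v_1)$ with $v_1\in R_v\setminus\{v\}$) treats shifted, folded embeddings via the identification $\iota:(v_0,R_0)\cong(v,R_v)$, the composite map $f\iota f$, and the minimal segment $S$ from $v$ to $\iota f(v)$, splitting on whether $f[S]=\iota^{-1}[S]$. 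Your final paragraph correctly identifies this folded case as ``where the real work of the lemma lies,'' but then offers only the expectation that the hypothesis ``is meant to realign these roles'' --- that is, the one genuinely hard case is named but not argued. As it stands the proposal proves neither the stated lemma nor, completely, the variant it substitutes for it.
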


\begin{center}
\epsfysize 1.5 true in
\epsfbox{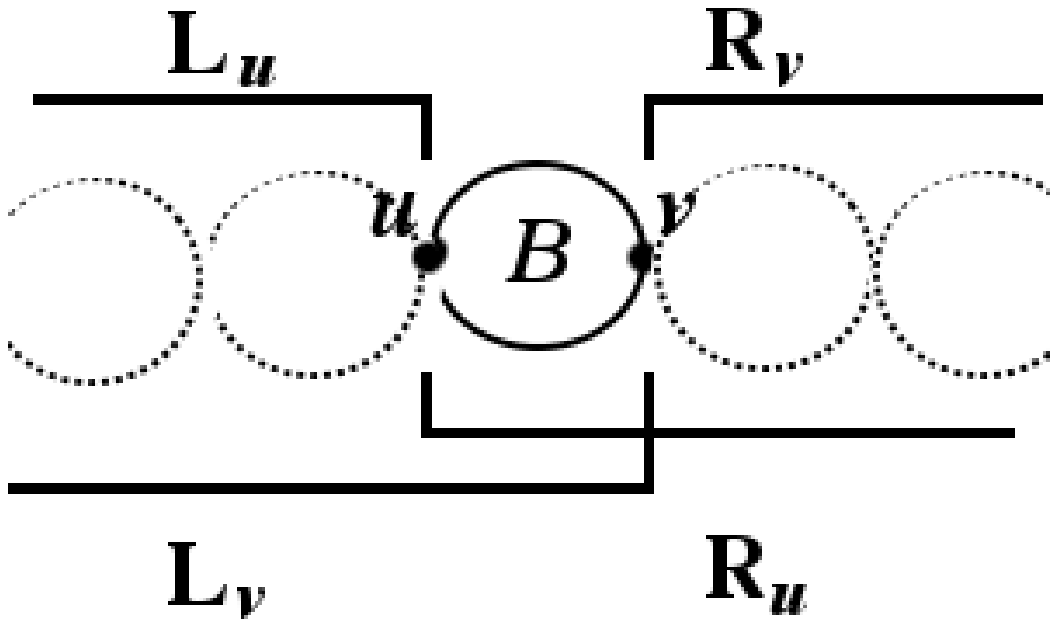}
\end{center}
\goodbreak
\begin{remark}
Here the block $B$ is common to $L_v$ and $R_u$, so one possible type of embedding would involve
a symmetry over $B$. In practice other types of embedding will also occur, so the concept of ``symmetry''
used here is very general.

In our notation, {\it L} and {\it R} stand for {\it left} and {\it right}. It is helpful to think of the tree of blocks, which is
a path, ordered so that $u<B<v$. Note that $R_v=L_v^+$.
In particular detaching $R_v$ leaves $L_v^-=L_v\setminus \{v\}$.
\end{remark}

\begin{proof}[Proof of Lemma \ref{SymmetricLocalPruning}]

We suppose that $H$ is an $L_v^-$-free
graph, and we set
$$\hat H=H*_{v}(v,R_v)$$
Our claim is that $\hat H$ is $C$-free.

We will write $(v_0,R_0)$ for the copy of $(v,R_v)$ in $\hat H$, and
fix an isomorphism
$$\iota:(v_0,R_0)\iso (v,R_v)$$

Suppose toward a contradiction that
$$f:C\iso \hat C\includedin \hat H$$
where the inclusion is as a subgraph.

If $f[R_v]\includedin R_0$ then $f[R_v]=R_0$ and $f[L_v^-]\includedin H$, a contradiction.
So
$$f[R_v]\not \includedin R_0$$
\smallskip

\goodbreak
\noindent{\it Case 1.} Suppose first that $v_0\notin f[R_v]$.

In this case we must have $f[R_v]$ contained either in $R_0\setminus
\{v_0\}$ or in $H$, and the first alternative is out of the question. Thus
$$f[R_v]\includedin H$$
In particular, $f(v)\in H$ and $f[B]\includedin H\union \{v_0\}$. 

If $f[B]\includedin H$ then $f[R_u]\includedin H$ and $L_v^-$ embeds into $H$, a contradiction.

So suppose  $v_0\in f[B]$ and $v_1=f^{-1}(v_0)$. Then as $v_1\ne v$ and $B$
is complete, we have $(R_u\setminus \{u\})\iso (R_u\setminus \{v_1\})$. 
But $f[(R_u\setminus \{v_1\})]\includedin
  H$ and thus $L_v^-$ embeds into $H$, a contradiction.
\smallskip

\noindent {\it Case 2.} Suppose that $v_0=f(v)$.

Then $f[L_v^-]$ is contained either in $H$ or in $R_0\setminus \{v_0\}$. 
As the first alternative is excluded by hypothesis, we have
$$f[L_v^-]\includedin R_0\setminus \{v_0\} $$

Since $f[R_v]\not \includedin R_0$, we have
$$\mbox{$f[R_v]\includedin H\union \{v_0\}$, with $f(v)=v_0$}$$
So $f\iota f[L_v^-]\includedin H$, a contradiction.

\smallskip
\goodbreak

\noindent {\it Case 3.} Suppose $v_0=f(v_1)$ with $v_1\in R_v\setminus \{v\}$. 

This is the most delicate case.

As $v\notin f[L_v^-]$ we again have $f[L_v^-]$ contained in  $H$ or $R_0\setminus \{v_0\}$, 
with the first alternative ruled out
by hypothesis. So we have
$$f[L_v^-]\includedin R_0\setminus \{v_0\}$$
In particular
$$f(v)\in R_0\setminus \{v_0\}$$

In what follows we are mainly concerned about the relation of $\iota f(v)$ to $v_1$.

Let $S$ be the smallest segment 
of $R_v$ containing $v$ and $\iota f(v)$. 

Suppose first that
\begin{align}
f[S]&=\iota^{-1}[S]
\end{align}

Then  $f[S]\includedin R_0$, and $v_0\in f[S]$, so since $f[R_v]\not\includedin R_0$
we find $f[R_v\setminus S]\includedin H$.

Now $f[L_v]\intersect \iota^{-1}[S]=f[L_v\intersect S]=\{f(v)\}$,
so $\iota f[L_v]\intersect S=\{\iota f(v)\}$.
Thus $\iota f[L_v^-]\includedin R_v\setminus S$,
and so $f \iota f[L_v^-]\includedin f[R_v\setminus S]\includedin H$, and we have an embedding
of $L_v^-$ into $H$, for a contradiction.

There remains the alternative
\begin{align}
f[S]&\ne \iota^{-1}[S]
\end{align}
and hence
$$f[S]\not\includedin \iota^{-1}[S]$$

Next we claim
$$f(\iota f(v))\in H$$

Otherwise, we have $f[S]\includedin R_0$. 
But $\iota f(v)\in S\intersect \iota f[S]$,  
$\iota f[S]\not\includedin S$, so $\iota f[S]$  contains the cut vertex between $S$ 
and $R_v\setminus S$ and the adjacent block of $R_v\setminus S$.
This then forces $f[R_v]\includedin R_0$, a contradiction.
So $\iota f(v)\in H$.

In particular, $v_0$ is a cut vertex of $f[S]$, and thus
$$\mbox{$v_1$ is a cut vertex of $S$.}$$
Let $S_1$ be the segment from $v$ to $v_1$ in $R_v$.

Now $f[R_v\setminus S]\includedin H$, and $f(v_1)=v$, so 
$$f[R_v\setminus S_1]\includedin H$$

Now $\iota^{-1}(v_1)$ is a cut vertex of $R_0$ lying between $v_0$ and $f(v)$.
Hence $\iota^{-1}(v_1)\in f[S]$, and $v_1\in \iota f[S]$. As $v_1\ne v$, we have
$$v_1\notin \iota f[L_v^-]$$

But $\iota f(v)\in \iota f[L_v]$, so 
$\iota f[L_v^-]\includedin R_v\setminus S_1$.
So $f\iota f[L_v^-]\includedin H$ and again we have a contradiction.
\end{proof}

\subsection{Application: The Main Theorem}

\begin{proof}[Proof of Theorem \ref{MainTheorem}]
We suppose toward a contradiction that 
$$C=(B_1,\dots,B_\ell)$$
is a block path with at least
one incomplete block, allowing a weakly universal $C$-free graph, and with the length
$\ell$ minimized.

We may suppose
$$|B_1|\le |B_\ell|$$

We claim that we may also suppose that one of the following two conditions applies
\begin{itemize}
\item $B_1\iso B_\ell$
\item $B_\ell$ is not isomorphic to a subgraph of $B_1$
\end{itemize}

If $|B_1|<|B_\ell|$ this is clear, while if  $|B_1|=|B_\ell|$, 
we are free to switch the roles of $B_1$ and $B_\ell$. So unless $B_1$ and $B_\ell$
are isomorphic, we may suppose that $B_\ell$ does not embed isomorphically into $B_1$.

So we arrive at the conditions of Lemma \ref{CornerPruning}, 
and in particular at the conclusions
that $\ell\ge 3$, $B_1$ is not complete, and that  $L_2^-=(B_1,B_2\setminus \{v_2\})$ 
embeds into
$(B_2\setminus \{v_1\},\dots,B_\ell)$. 
Taking $B=B_2$ in Lemma \ref{SymmetricLocalPruning}, we find that $R_3=(B_3,\dots,B_\ell)$ is detachable. Thus 
there is a weakly universal $L_2^-$-free graph. 
So by the case $\ell=2$ (or $\ell=1$ if $|B_2|=2$), the block $B_1$ is complete.
This contradiction completes the proof.
\end{proof}

\bibliographystyle{plain}
\goodbreak

\medskip
\end{document}